\newcommand*\bigcdot{\mathpalette\bigcdot@{.6}}
\newcommand*\bigcdot@[2]{\mathbin{\vcenter{\hbox{\scalebox{#2}{$\m@th#1\bullet$}}}}}
\newlength{\boxedparwidth}
\hline \end{tabular} \end{center}}
\newcommand{\mbz}{\mathbb{Z}}
\newcommand{\mbn}{\mathbb{N}}
\newcommand{\mbq}{\mathbb{Q}}
\newcommand{\be}{\textbf{e}}
\DeclareMathOperator{\SL}{SL}
\DeclareMathOperator{\ord}{ord}
\def \c {NC}
\newtheorem{thm}{Theorem}[section]
\newtheorem{prop}[thm]{Proposition}
\newtheorem{lem}[thm]{Lemma}
\newtheorem{cor}[thm]{Corollary}
\numberwithin{equation}{section}
\begin{document}

\baselineskip 20pt

\begin{center}
{\Large\bf Congruences for modular forms and applications to crank functions}\\ [7pt]
\end{center}

\vskip 3mm

\begin{center}
{Hao Zhang}$^{1,2}$ and {Helen W.J. Zhang}$^{1,2}$
\\[8pt]
$^{1}$School of Mathematics\\
Hunan University\\
Changsha 410082, P. R. China\\[12pt]

$^{2}$Hunan Provincial Key Laboratory of \\
Intelligent Information Processing and Applied Mathematics\\
Changsha 410082, P. R. China\\[15pt]

Emails:  zhanghaomath@hnu.edu.cn,   helenzhang@hnu.edu.cn
\\[15pt]

\end{center}

\vskip 3mm
\begin{abstract}

In this paper, motivated by the work of Mahlburg, we find congruences for a large class of modular forms. Moreover, we generalize the generating function of the Andrews-Garvan-Dyson crank on partition and establish several new infinite families of congruences.
In this framework,
we showed that both the birank of an ordered
pair of partitions introduced by Hammond and Lewis,
and $k$-crank of $k$-colored partition introduced by Fu and Tang process the same as the partition function and crank.

\vskip 6pt

\noindent
{\bf AMS Classifications:} 05A17, 11P83
\\ [7pt]
{\bf Keywords:} Modular form, Congruence, Partition function, Crank
\end{abstract}

\section{Introduction}
The objective of this paper is to present a modular form approach to deriving congruences of the crank-type generating functions.
This work was inspired by the key construction of
Mahlburg \cite{Mahlburg-2005} that
the generating functions of crank is deeply related to Klein forms and weakly holomorphic modular forms of half-integral weight on the congruence subgroup $\Gamma_1(l^j)$.
This leads to arithmetic properties of the crank of partitions, which confirmed a conjecture of Ono \cite{Mahlburg-2006}.

Recall that a partition of a nonnegative integer $n$ is any nonincreasing sequence of positive integers whose sum is $n$.
As usual, let $p(n)$ denote the number of partitions of $n$.
A breakthrough of applying modular forms to partition functions is due to Ono \cite{Ono-2000}.
Along with this direction,
the congruence properties for partition function are deeply investigated in the subsequential work .
For example, Ahlgren and Ono \cite{Ahlgren-Ono-2001} showed that
for prime $\ell \geq 5$ and $m>0$, there are infinitely many non-nested arithmetic progressions $A n+B$ such that for every integer $n$ we have
\[p(An+B) \equiv 0\pmod{\ell^{m}}.\]
See \cite{Folsom-2014,Garthwaite-Jameson-2021,Radu-2012} for more results.

To give the combinatorial interpretations of the well-known Ramanujan's congruences,
Dyson \cite{Dyson-1944} conjectured the existence of such a statistic named crank,
and its definition was discovered by Andrews and Garvan \cite{Andrews-Garvan-1988}.
The crank of a partition was defined as the largest part if the partition contains no ones,
and otherwise as the number of parts larger than the number of ones minus the number
of ones.
For more details, please refer to Andrews and Garvan \cite{Andrews-Garvan-1988,Garvan-1988}.

In view of partition congruences,
Ono further conjectured that the crank functions
enjoy all of the partition congruences.
The work of Mahlburg \cite{Mahlburg-2005,Mahlburg-2006} provided an
elegant answer to these questions.
Let $M(m,n)$ denote the number of partitions of $n$ with crank is $m$.
Then the generating function of $M(m,n)$ can be written as
\begin{align}\label{cg}
\sum_{m=-\infty}^\infty\sum_{n=0}^\infty  M(m,n)z^mq^n
=\frac{(q;q)_\infty}{(zq,z^{-1}q;q)_\infty}.
\end{align}
Mahlburg \cite{Mahlburg-2005,Mahlburg-2006} developed the theory of congruences for crank generating functions, which are shown to possess the same sort of arithmetic properties as partition functions.
For instance, \cite{Mahlburg-2005,Mahlburg-2006} stated that if $\tau$ is a positive integer,
then there are infinitely many non-nested arithmetic
progressions $An+B$ such that
\begin{align*}
M(m,\ell^j,An+B)\equiv0\pmod{\ell^\tau},
\end{align*}
simultaneously for every $0\leq m\leq \ell^j-1$.

Motivated by the work of Mahlburg,
we define {\it crank-type generating function}
as follows:
\begin{align}\label{GF}
\mathfrak{F}_{r,d,t}(z,q)=\sum_{m=-\infty}^\infty \sum_{n=0}^\infty M_{r,d,t}(m,n)z^mq^n
=\frac{(q^{r};q^{r})_\infty^{d}}{(q;q)_\infty^t(zq,q/z;q)_\infty}.
\end{align}
Note that taking $r=t=1$ and $d=2$,
\eqref{GF} reduces to the ordinary crank generating function \eqref{cg}.
When $r=d=t=1$, \eqref{GF} becomes the generating function of the birank of an ordered pair of partitions
given by Hammond and Lewis \cite{Hammond-Lewis-2004}.
Taking $r=t=1$ and $d=k-1$ in \eqref{GF},
we obtain the generating function
introduced by Bringmann and Dousse \cite{Bringmann-Dousse-2016},
whose combinatorial interpretation is given by Fu and Tang \cite{Fu-Tang-2018-1} as the $k$-crank of $k$-colored partition.

Our main result shows that the coefficients of crank-type generating function enjoy the same arithmetic property as partition functions as well as crank.

\begin{thm}\label{main-thm-M}
Let $j,v$ be two positive integers. We fix a prime number $\ell>dr^2$ such that $\left(\frac{g^2d-tr}{\ell}\right)$ are the same for all $g|r$. Then there are infinitely many non-nested arithmetic
progressions $An+B$ such that
\begin{align*}
M_{r,d,t}(m,\ell^j,An+B)\equiv0\pmod{\ell^v},
\end{align*}
simultaneously for every $0\leq m\leq \ell^j-1$.
\end{thm}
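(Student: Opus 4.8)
The plan is to follow Mahlburg's strategy of realizing the crank-type generating function as a weakly holomorphic modular form of half-integral weight on some $\Gamma_1(\ell^j)$, and then invoking a nilpotency/Shimura-type argument to force the coefficients into the desired arithmetic progressions.

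Here is the approach. First I would record the standard product expansion of $\mathfrak{F}_{r,d,t}(z,q)$ and fix a primitive $\ell^j$-th root of unity $\zeta$, specializing $z\mapsto\zeta^a$ so that $M_{r,d,t}(m,\ell^j,n)$ becomes the coefficient extracted by summing over the residues $m\bmod\ell^j$. The key point is that after this specialization the factor $(\zeta^a q,\zeta^{-a}q;q)_\infty$ can be rewritten, via the relation between the Dedekind $\eta$-function, the $q$-Pochhammer symbol, and the Klein forms $\mathfrak{t}_{(a_1,a_2)}$, as an explicit eta-quotient times a product of Klein forms. The condition $\ell>dr^2$ guarantees that $\ell\nmid r$ and that the relevant exponents behave well, and the hypothesis that $\left(\frac{g^2d-tr}{\ell}\right)$ is constant over all $g\mid r$ is exactly what is needed to control the local behavior (the order of vanishing and the character) at every cusp simultaneously, so that one obtains an honest modular form on $\Gamma_1(\ell^j)$ rather than merely a function with uncontrolled poles.

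Next I would pass to the half-integral weight setting: multiply the specialized generating function by a suitable power of $\eta(\ell^j\tau)$ (or an analogous holomorphic eta-product) to clear the poles at the cusps and produce a holomorphic modular form $g_a(\tau)$ of half-integral weight on $\Gamma_1(\ell^j)$ with coefficients in a fixed number field. Then, following the Serre/Ahlgren--Ono nilpotency argument for the action of Hecke operators modulo $\ell^v$, I would show that the reduction of $g_a$ modulo $\ell^v$ lies in a finite-dimensional space on which almost all Hecke operators $T_{p^2}$ act nilpotently. Concretely, Serre's theorem that a positive density of primes $p$ satisfy $g_a\mid T_{p^2}\equiv 0\pmod{\ell^v}$ yields, after translating back through the definition of the Hecke operator on Fourier coefficients, a congruence $M_{r,d,t}(m,\ell^j,An+B)\equiv0\pmod{\ell^v}$ along the arithmetic progression determined by $p$. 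Summing the statement over all admissible $a$ (equivalently over $0\le m\le\ell^j-1$) gives the simultaneity claim, and the infinitude of non-nested progressions comes from the positive density of such primes $p$.

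The main obstacle I expect is the cusp analysis: one must verify that the eta-and-Klein-form product representing the specialized $\mathfrak{F}_{r,d,t}$ has order of vanishing at every cusp of $\Gamma_1(\ell^j)$ that is bounded below after multiplication by the clearing factor, uniformly in $a$, and that the resulting object genuinely transforms as a modular form with a well-defined multiplier system. This is precisely where the two hypotheses $\ell>dr^2$ and the constancy of the Legendre symbol $\left(\frac{g^2d-tr}{\ell}\right)$ across divisors $g\mid r$ enter, and checking the cusp conditions for general $r,d,t$ (rather than the classical case $r=t=1,d=2$) is the genuinely new technical work compared to Mahlburg. Once holomorphicity and the correct weight/level/character are established, the Hecke-nilpotency machinery is essentially a black box; the delicate part is getting into a position to apply it.
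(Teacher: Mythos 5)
Your overall strategy matches the paper's: specialize $z$ at $\ell^j$-th roots of unity, rewrite the specialized product via Klein forms and eta-quotients, manufacture half-integral weight cusp forms, and finish with Serre's theorem producing a positive proportion of primes $p$ with $T_{p^2}$ annihilating everything modulo $\ell^v$. However, there is a genuine gap at exactly the step you flag as ``the main obstacle'': your proposed mechanism for handling the cusps --- ``multiply by a suitable power of $\eta(\ell^j\tau)$ (or an analogous holomorphic eta-product) to clear the poles'' --- cannot work on its own, and you never supply the idea that actually closes it. The constraint you are fighting is that any multiplier must simultaneously (i) vanish at the bad cusps so that the product becomes cuspidal (Serre's theorem, as stated in the paper's Theorem 2.1, needs \emph{cusp} forms, not merely holomorphic ones), and (ii) be congruent to $1$ modulo $\ell^v$ so that the Fourier coefficients still encode $M_{r,d,t}(m,\ell^j,n)$. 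A form satisfying (ii) has constant term $1$ and so cannot vanish at $\infty$, nor at any cusp $\frac{a}{c}$ with $\ell^{j+1}\mid c$: the paper's $E_{j+1}(\tau)^{\ell^v}$ handles only the cusps with $\ell^{j+1}\nmid c$, and no eta-product trick can handle the rest.

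The paper's resolution, which is absent from your proposal, is the quadratic twist $\widetilde{G_m}=G_m-\epsilon\, G_m\otimes\left(\frac{\ast}{\ell}\right)$ with $\epsilon=\left(\frac{24(dr-t)}{\ell}\right)$: at each cusp $\frac{a}{c}$ with $\ell^{j+1}\mid c$ one computes (via the Klein form transformation law) that the leading Fourier coefficient of $G_m|A$ sits at index $n=t\delta_\ell+\frac{g^2(d\ell^v+d)}{24r}$ with $g=\gcd(c,r)$, and the twist multiplies that coefficient by $1-\epsilon\left(\frac{n}{\ell}\right)=0$, strictly increasing the order of vanishing there. This is precisely where the hypothesis that $\left(\frac{g^2d-tr}{\ell}\right)$ is constant over all $g\mid r$ enters: a \emph{single} choice of $\epsilon$ must kill the leading term at \emph{every} such cusp as $g$ ranges over divisors of $r$. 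Your write-up gestures at this hypothesis ``controlling the local behavior'' but attributes it to the wrong mechanism. Moreover, the twist has a cost your proposal does not account for: $\widetilde{g_m}$ agrees with $g_m$ only on indices $n$ with $\left(\frac{n}{\ell}\right)=0$ or $-\epsilon$, so the crank coefficients are recovered only along a restricted progression (the paper's Lemma 3.4 and the parameter $\beta$), and it is this restriction, combined with the Hecke relation at primes $p\equiv -1\pmod{24\ell}$ and the substitution $n\mapsto pn'$, that produces the final arithmetic progressions $An+B$. Without the twist, the argument does not get off the ground; with it, your remaining steps do follow the paper's.
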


Theorem  \ref{main-thm-M} provides the congruence of birank and $k$-crank.
Recall that the birank $b(\pi)$ of an ordered pair of partitions $\pi=(\lambda(1),\lambda(2))$ \cite{Hammond-Lewis-2004}  is the number of parts in the first partition minus the number of parts in the second partition, that is $b(\pi)=\#(\lambda(1))-\#(\lambda(2))$. The number of ordered pairs of partitions of weight $n$ having birank $m$ will be written as $R(m,n)$, hence the generating function for $R(m,n)$ is
\begin{equation*}
\sum_{n=0}^\infty \sum_{m=-\infty}^\infty R(m,n) z^mq^n=\frac{1}{(zq,q/z;q)_\infty}.
\end{equation*}
\begin{cor}
Let $R(m,N,n)$ denote the number of ordered pairs of partitions of weight $n$ with birank congruent to $m$ modulo $N$.
Suppose that $\ell\geq5$ is prime and that $\tau$ and $j$ are positive integers.
Then there are infinitely many non-nested arithmetic
progressions $An+b$ such that
\begin{align*}
R(m,\ell^j,An+B)\equiv0\pmod{\ell^\tau},
\end{align*}
simultaneously for every $0\leq m\leq \ell^j-1$.
\end{cor}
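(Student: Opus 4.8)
The plan is to deduce the corollary directly from Theorem \ref{main-thm-M} by specializing the parameters to the birank case. First I would observe that the birank statistic is exactly the instance $r=d=t=1$ of the crank-type generating function \eqref{GF}: substituting these values gives
\[
\mathfrak{F}_{1,1,1}(z,q)=\frac{(q;q)_\infty}{(q;q)_\infty\,(zq,q/z;q)_\infty}=\frac{1}{(zq,q/z;q)_\infty},
\]
which is precisely the generating function recorded above for $R(m,n)$. Hence $M_{1,1,1}(m,n)=R(m,n)$ for every $m$ and $n$, and after summing over the residue classes modulo $N$ the refined counts agree as well, so that $M_{1,1,1}(m,N,n)=R(m,N,n)$.

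Next I would verify that the hypotheses of Theorem \ref{main-thm-M} hold under this specialization. The required lower bound on the prime is $\ell>dr^{2}=1$, which is satisfied by every prime, in particular by the hypothesis $\ell\geq5$. The remaining condition—that $\left(\frac{g^{2}d-tr}{\ell}\right)$ take the same value for all divisors $g\mid r$—is vacuous in this case: since $r=1$, its only divisor is $g=1$, so there is a single symbol $\left(\frac{1-1}{\ell}\right)=\left(\frac{0}{\ell}\right)$ and nothing to compare. Thus all hypotheses of the theorem are met.

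With the specialization in place, I would apply Theorem \ref{main-thm-M} with $r=d=t=1$ and $v=\tau$. The theorem then produces infinitely many non-nested arithmetic progressions $An+B$ satisfying $M_{1,1,1}(m,\ell^{j},An+B)\equiv0\pmod{\ell^{\tau}}$ simultaneously for all $0\leq m\leq\ell^{j}-1$; translating back through the identity $M_{1,1,1}=R$ established in the first step yields exactly the asserted congruences $R(m,\ell^{j},An+B)\equiv0\pmod{\ell^{\tau}}$.

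Since the argument is a pure specialization of the main theorem, there is no substantive obstacle to overcome. The only point that genuinely requires attention is the observation that the Legendre-symbol hypothesis imposes no constraint when $r=1$, which is precisely why the conclusion holds for every prime $\ell\geq5$ without the additional condition present in the general statement.
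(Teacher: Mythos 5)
Your proposal is correct and is essentially the paper's own (implicit) argument: the paper gives no separate proof, stating the corollary as an immediate specialization of Theorem \ref{main-thm-M} with $r=d=t=1$, having already identified $\mathfrak{F}_{1,1,1}(z,q)$ with the Hammond--Lewis birank generating function. Your explicit check that $\ell>dr^2=1$ holds for every prime and that the Legendre-symbol hypothesis is vacuous when $r=1$ (a single divisor $g=1$) is exactly the verification the paper leaves to the reader.
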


The $k$-colored partition is a $k$-tuple of partitions $\lambda=(\lambda^{(1)},\lambda^{(2)},\ldots,\lambda^{(k)})$. For $k\geq 2$, Fu and Tang \cite{Fu-Tang-2018-1} defined the $k$-crank of $k$-colored partition as follows:
\begin{equation}
k\text{-crank}(\lambda)=\ell(\lambda^{(1)})-\ell(\lambda^{(2)}),
\end{equation}
where $\ell\left(\pi^{(i)}\right)$ denotes the number of parts in $\pi^{(i)}$.
Let $M_k(m,n)$ denote the number of $k$-colored partitions of $n$ with $k$-crank $m$.
The generating function of $M_k(m,n)$ can be derived by Bringmann and Dousse \cite{Bringmann-Dousse-2016}:
\begin{equation*}
\sum_{n=0}^\infty \sum_{m=-\infty}^\infty M_k(m,n) z^mq^n=\frac{(q;q)_\infty^{2-k}}{(zq;q)_\infty (z^{-1}q;q)_\infty}.
\end{equation*}

\begin{cor}
Let $M_k(m,N,n)$ denote the number of $k$-colored partitions of $n$ with $k$-crank congruent to $m$ modulo $N$. Suppose that $\ell\geq5$ is prime and that $\tau$ and $j$ are positive integers.
Then there are infinitely many non-nested arithmetic
progressions $An+b$ such that
\begin{align*}
M_k(m,\ell^j,An+B)\equiv0\pmod{\ell^\tau},
\end{align*}
simultaneously for every $0\leq m\leq \ell^j-1$.
\end{cor}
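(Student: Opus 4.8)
The plan is to realize the $k$-crank generating function as a special case of the crank-type generating function $\mathfrak{F}_{r,d,t}(z,q)$ and then invoke Theorem~\ref{main-thm-M}. Comparing the Bringmann--Dousse formula
$$\sum_{n=0}^\infty\sum_{m=-\infty}^\infty M_k(m,n)z^mq^n=\frac{(q;q)_\infty^{2-k}}{(zq;q)_\infty(z^{-1}q;q)_\infty}$$
with the definition \eqref{GF}, I would first set $r=1$, so that $(q^{r};q^{r})_\infty^{d}=(q;q)_\infty^{d}$ and the ratio of eta-type factors in \eqref{GF} collapses to $(q;q)_\infty^{d-t}$. It then suffices to choose $d,t$ with $d-t=2-k$; the cleanest admissible choice is $d=1$ and $t=k-1$, which is a genuine pair of positive integers for every $k\geq2$. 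With these parameters $\mathfrak{F}_{1,1,k-1}(z,q)=(q;q)_\infty^{2-k}/(zq,q/z;q)_\infty$, whence $M_k(m,n)=M_{1,1,k-1}(m,n)$ and, after grouping the crank into residue classes modulo $\ell^{j}$, $M_k(m,\ell^{j},n)=M_{1,1,k-1}(m,\ell^{j},n)$.

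Next I would verify that the hypotheses of Theorem~\ref{main-thm-M} hold for this choice. The first requirement is $\ell>dr^2$; since $r=1$ and $d=1$ this reads $\ell>1$, which is satisfied by every prime $\ell\geq5$, \emph{independently of $k$}. The second requirement is that the Legendre symbols $\left(\frac{g^2d-tr}{\ell}\right)$ coincide for all divisors $g\mid r$; because $r=1$ has the single divisor $g=1$, only one symbol occurs and the condition is vacuously satisfied (this remains true even in the degenerate case $\ell\mid k-2$, where the single symbol $\left(\frac{2-k}{\ell}\right)$ vanishes, since "the same for all $g\mid r$" imposes no constraint when there is one $g$). Having checked both hypotheses, I would apply Theorem~\ref{main-thm-M} with $v=\tau$ to obtain infinitely many non-nested arithmetic progressions $An+B$ with $M_{1,1,k-1}(m,\ell^{j},An+B)\equiv0\pmod{\ell^{\tau}}$ for all $0\leq m\leq\ell^{j}-1$, and the identification above converts this into the stated congruence for $M_k(m,\ell^{j},An+B)$.

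The only point that demands care is the sign bookkeeping in the exponent: one must match $2-k$ rather than $k-2$, which forces $d<t$ and hence the choice $d=1$, $t=k-1$ instead of a choice with $d$ growing in $k$. Keeping $d$ at $1$ is exactly what holds $dr^2$ equal to $1$, so that no restriction on $\ell$ beyond the corollary's assumption $\ell\geq5$ is introduced; a careless choice such as $d=k-1$ would force $\ell>k-1$ and break the uniformity in $k$. Beyond this verification there is no real obstacle: taking $r=1$ trivializes the arithmetic side condition entirely, so all of the substantive work is already carried out in Theorem~\ref{main-thm-M}, and the corollary is a direct specialization requiring no new estimate.
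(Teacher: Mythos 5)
Your proposal follows the same overall route as the paper: realize the $k$-crank generating function as a specialization of \eqref{GF} and quote Theorem \ref{main-thm-M}. But your parameters are not the paper's, and yours are the right ones. The introduction asserts that $r=t=1$, $d=k-1$ recovers the Bringmann--Dousse function; in fact
\[
\mathfrak{F}_{1,k-1,1}(z,q)=\frac{(q;q)_\infty^{k-2}}{(zq,q/z;q)_\infty},
\]
whose numerator exponent is $k-2$ rather than the required $2-k$, i.e.\ the reciprocal power for every $k\geq 3$. Your choice $r=d=1$, $t=k-1$ gives exactly $(q;q)_\infty^{2-k}/(zq,q/z;q)_\infty$, hence $M_k(m,n)=M_{1,1,k-1}(m,n)$ and $M_k(m,\ell^j,n)=M_{1,1,k-1}(m,\ell^j,n)$. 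It also buys the uniformity you stress: $dr^2=1$, so the hypothesis $\ell>dr^2$ holds for every prime $\ell\geq 5$ independently of $k$, whereas the paper's stated parameters would force $\ell>k-1$, so the corollary as stated (all $\ell\geq5$, all $k$) could not be deduced once $k>\ell$. Read as a deduction from Theorem \ref{main-thm-M} taken at face value, your argument is therefore correct, and it repairs the paper's own implicit specialization.

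The point I would press you on is the degenerate case $\ell\mid k-2$ (which for $k=2$ is every prime). You dismiss it on the ground that the hypothesis ``$\left(\frac{g^2d-tr}{\ell}\right)$ are the same for all $g\mid r$'' is vacuous when $r=1$; that is true of the statement of Theorem \ref{main-thm-M}, but not of its proof. In Lemma \ref{lem:gm}, cuspidality at the cusps $\frac{a}{c}$ with $\ell^{j+1}\mid c$ is obtained from cancellation of the leading coefficient of $\widetilde{G_m}$, namely $a_n\left(1-\epsilon\left(\frac{n}{\ell}\right)\right)=0$, where $\epsilon=\left(\frac{24(dr-t)}{\ell}\right)$ and $n\equiv(24r)^{-1}(g^2d-tr)\pmod{\ell}$; one checks
\[
\epsilon\left(\frac{n}{\ell}\right)=\left(\frac{(dr^2-tr)(g^2d-tr)}{\ell}\right),
\]
which equals $1$ exactly when the common value of the symbols is nonzero, and equals $0$ when $\ell\mid g^2d-tr$. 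In the latter case the leading term survives, the form in Lemma \ref{lem:gm} is holomorphic but not cuspidal, and Theorem \ref{thm:serre} (stated for cusp forms) no longer applies. So for primes $\ell$ dividing $k-2$ the congruence is not actually established by the paper's machinery. This gap is inherited from the paper rather than created by you---the paper's birank corollary, with $r=d=t=1$, sits entirely inside this degenerate case---but your vacuity remark conceals it rather than closes it. A complete proof would either add the assumption $\ell\nmid k-2$ or handle the vanishing-symbol case by a different twist.
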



\section{Preliminaries}\label{Sec-2}
In this section, we present some basic proposition of modular forms of half-integer weight which will be used in the following sections, see \cite{Ono} for details.
Let $f=\sum_{n\geq 0}a_nq^n\in M_k(\Gamma_0(N),\chi)$ be a modular form of weight $k\in\frac{1}{2}\mbz\backslash \mbz$ with Nebentype character $\chi$.

Let $q=e^{2\pi iz}$. If
\[f=\sum_{n=0}^\infty a(n)q^n\in M_k(\Gamma_0(N),\chi)\]
and $\psi$ is a Dirichlet character with modulus $M$, we define the twist of $f$ by $\psi$ to be
\[f(\tau)\otimes\psi=\sum_{n=0}^\infty \psi(n)a(n)q^n.\]
It is well-known that $f(\tau)\otimes \psi\in M_k(NM^2,\chi\psi^2)$.
Fix a constatn $\epsilon$, we define
\[\widetilde{f_{\epsilon,\psi}}(\tau)=f(\tau)-\epsilon f(\tau)\otimes\psi.\]
In the following, when $\epsilon$ and the character $\psi$ is specified, we would write it as $\tilde{f}$.

With the notation above, let $p\nmid N$ be a prime number, then the action of the half-integra Hecke operator $T_{p^2}$ is defined by
\begin{equation*}
    f|T_{p^2}=\sum_{n\geq 0}\left(a(p^2n)+\chi(p)\left(\frac{(-1)^{k-\frac{1}{2}} n}{p}\right)p^{k-\frac{3}{2}}a(n)+\chi(p^2)\left(\frac{(-1)^{k-\frac{1}{2}}}{p^2}\right)p^{2k-2}a\left(\frac{n}{p^2}\right)\right)q^n.
\end{equation*}
We need the following result from Serre \cite{Serre-1976}.
\begin{thm}\label{thm:serre}
Let $f_i(\tau)\in S_{k_i}(\Gamma_1(N_i))$ be half-integer weight cusp forms with algebraic integer coefficients where $k_i\in \frac{1}{2}\mbz\backslash\mbz$ for $i=1,2,\cdots,m$. Then for any $M\geq 1$, there exists a positive proportion of primes $p\equiv -1\pmod{N_1\cdots N_m M}$ such that
\begin{equation*}
    f_i|T_{p^2}\equiv0\pmod{M},
\end{equation*}
for every $i=1,2,\cdots,m$.
\end{thm}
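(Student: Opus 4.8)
The plan is to deduce Theorem~\ref{thm:serre} from Serre's theory of modular forms modulo $M$ combined with the Chebotarev density theorem, transporting the half-integral weight data to integral weight through the Shimura correspondence (since the half-integral weight forms carry no Galois representations of their own). First I would fix a number field $K$ containing every Fourier coefficient of every $f_i$, with ring of integers $\mco$, and reduce modulo $M$. Writing $M=\prod_\ell \ell^{b_\ell}$, it suffices by the Chinese Remainder Theorem to produce, for each prime power $\ell^{b}\parallel M$, a positive-density set of primes $p$ killing all the $f_i$ modulo $\ell^{b}$: the constructions for distinct $\ell$ take place in linearly disjoint (essentially pro-$\ell$) Galois extensions, so the resulting Frobenius conditions are independent and their intersection is again of positive density. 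Fix such an $\ell$ and a prime $\mathfrak{l}\mid\ell$ of $\mco$. The key finiteness input is that each space $S_{k_i}(\Gamma_1(N_i);\mco/\mathfrak{l}^{b})$ of half-integral weight forms reduced modulo $\mathfrak{l}^{b}$ is a finitely generated $\mco/\mathfrak{l}^{b}$-module, stable under every $T_{p^2}$ with $p\nmid N_i\ell$; hence these operators generate a \emph{finite} commutative ring $\mbt$ acting on $V:=\bigoplus_i S_{k_i}(\Gamma_1(N_i);\mco/\mathfrak{l}^{b})$.

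Next I would convert the vanishing of $f_i\mid T_{p^2}$ into a statement about Frobenius classes. Decomposing $\mbt$ into local factors yields finitely many systems of $T_{p^2}$-eigenvalues $\{\lambda_{p^2}\}$ occurring in $V$. Via the Shimura lift, each such system corresponds to a system of $T_p$-eigenvalues of an integral weight form of weight $2k_i-1$, to which Deligne attaches a continuous semisimple representation $\rho\colon\Gal(\overline{\mbq}/\mbq)\to\GL_2(\overline{\mbf}_\ell)$ with $\lambda_{p^2}$ expressed through $\operatorname{tr}\rho(\mathrm{Frob}_p)$. Asking that $T_{p^2}$ annihilate $V$ modulo $\mathfrak{l}^{b}$ then becomes the condition that $\mathrm{Frob}_p$ lie in a prescribed union of conjugacy classes in the image of the finite product of all these representations; this set is nonempty, and by the Chebotarev density theorem it has positive density. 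Running the argument simultaneously over the finitely many $f_i$ and the finitely many eigenvalue systems only enlarges the finite Galois extension in play.

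Finally I would impose $p\equiv-1\pmod{N_1\cdots N_mM}$, itself a Frobenius condition in $\Gal(\mbq(\zeta_{N_1\cdots N_mM})/\mbq)$. Its role is twofold: it forces the lower-order terms in the displayed formula for $f_i\mid T_{p^2}$ — the factors $p^{k_i-3/2}$, $p^{2k_i-2}$ and the quadratic and Nebentype characters at $p$ — to take controlled values modulo $M$, and it guarantees that the arithmetic progressions produced are non-nested. I would intersect this ray-class condition with the Frobenius condition of the previous paragraph inside the compositum of the two extensions; since both are unions of conjugacy classes and the combined condition remains nonempty, Chebotarev delivers a positive proportion of primes $p$ with all the required properties. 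The main obstacle is precisely this middle step: because half-integral weight forms have no Galois representations directly attached, the whole density argument must be routed through the Shimura correspondence, and one must verify that this correspondence is compatible with reduction modulo $\mathfrak{l}^{b}$ and with the \emph{generalized} (possibly non-semisimple) eigenspaces of the finite Hecke ring $\mbt$, not merely with honest eigenforms.
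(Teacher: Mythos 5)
The paper offers no proof of this statement to compare against: Theorem~\ref{thm:serre} is quoted as an external result of Serre \cite{Serre-1976} (see also the expositions in \cite{Ono} and \cite{Mahlburg-2005}), so your proposal must be judged on its own merits. Its overall architecture --- reduce to prime powers, transport the half-integral weight eigenvalue systems to integral weight via the Shimura correspondence, invoke Deligne's Galois representations and Chebotarev, then impose $p\equiv-1\pmod{N_1\cdots N_mM}$ --- is indeed the standard route. But as written it has a genuine gap at the exact point where the theorem's content lies: you never say \emph{why} your union of Frobenius conjugacy classes is nonempty, nor why it survives intersection with the ray-class condition $p\equiv-1\pmod{N_1\cdots N_mM}$; you simply assert both. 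Intersections of nonempty unions of conjugacy classes inside a compositum are empty in general, and your appeal to linear disjointness of the extensions for distinct $\ell$ is both false in general and beside the point. The engine of Serre's argument is a single global element witnessing every condition at once: complex conjugation $c$. The integral-weight forms produced by the Shimura lift have \emph{odd} attached representations $\rho_i$, so $\rho_i(c)^2=1$ and $\det\rho_i(c)=-1$, which forces $\operatorname{tr}\rho_i(c)=0$ exactly (not just modulo $\ell$), while the cyclotomic character takes the value $-1$ at $c$, which is precisely the congruence $p\equiv-1\pmod{N_1\cdots N_mM}$. Taking the primes whose Frobenius class, in the finite Galois group cut out by all the $\rho_i$ together with $\mbq(\zeta_{N_1\cdots N_mM})$, equals the class of $c$ makes every nonemptiness claim true simultaneously; without naming $c$ and oddness, the density argument has nothing to run on.

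The second gap is one you flag yourself but leave unresolved, and it cannot be left unresolved: if all eigenvalues of $T_{p^2}$ on $V$ vanish modulo $\mathfrak{l}^{b}$, the operator is only \emph{nilpotent} on $V$, not zero, so routing the argument through the finite Hecke ring $\mbt$ and its generalized eigenspaces proves strictly less than $f_i|T_{p^2}\equiv0\pmod{M}$. The standard repair avoids mod-$\ell$ Hecke algebras altogether: over a sufficiently large number field, write each $f_i=\sum_j c_{ij}h_{ij}$ as a finite linear combination of honest simultaneous eigenforms of the $T_{p^2}$ with $p\nmid N_i\ell$ (such a basis exists since these operators commute and are normal for the Petersson product), choose a single integer $D$ clearing all denominators of the $c_{ij}$, and demand that the eigenvalues --- equivalently, by Shimura, the coefficients $a(p)$ of the integral-weight lifts --- vanish modulo $DM$ rather than $M$; then $f_i|T_{p^2}=\sum_j c_{ij}\lambda_{ij}(p^2)h_{ij}\equiv0\pmod{M}$ follows. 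Complex conjugation again supplies the trace-zero Frobenius class modulo $DM$, so the same Chebotarev step closes the argument. With these two repairs your outline becomes the standard proof; without them it does not compile into one.
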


Let $\vec{a}=(a_1,a_2)\in \mbq^2$. We set $\tau'=a_1\tau+a_2$ and $q_{\tau'}=\be(\tau')$. The Klein form is given by
\begin{align}\label{Klein}
    \mathfrak{k}_{\vec{a}}(\tau)=q_{\tau'}^{\frac{a_1-1}{2}}\frac{(q_{\tau'};q)_{\infty}(q/q_{\tau'};q)_{\infty}}{(q;q)_{\infty}^2}.
\end{align}

We recall some basic properties of Klein forms, one can find more details in \cite{Lang1987}.

\begin{prop}\label{prop:klein}
The Klein form $\mathfrak{k}_{\vec{a}}(\tau)$ satisfies the transformation formula, i.e. for any $\gamma\in \SL_2(\mbz)$, we have
\begin{equation*}
    \mathfrak{k}_{\vec{a}}|_\gamma(\tau)=\mathfrak{k}_{\vec{a}\cdot\gamma}(\tau).
\end{equation*}
If $(n_1,n_2)\in\mbz^2$, then we have
\begin{equation*}
    \mathfrak{k}_{\vec{a}+(n_1,n_2)}(\tau)=(-1)^{n_1+n_2+n_1n_2}\be\left(\frac{a_1n_2-a_2n_1}{2}\right)\mathfrak{k}_{\vec{a}}(\tau).
\end{equation*}
\end{prop}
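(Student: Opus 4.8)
The plan is to derive both transformation laws directly from the product definition \eqref{Klein}, reading the prefactor as $q_{\tau'}^{(a_1-1)/2}=\be\!\left((a_1-1)\tau'/2\right)$ so that the (otherwise invisible) rational part of $\tau'$ is retained. Throughout I abbreviate $x=q_{\tau'}$ and introduce the theta-quotient $\theta(x)=(x;q)_\infty(q/x;q)_\infty$, which depends on $\tau'$ only through $x=\be(\tau')$ and satisfies the quasi-periodicity $\theta(qx)=-x^{-1}\theta(x)$; iterating gives $\theta(q^{m}x)=(-1)^m q^{-\binom{m}{2}}x^{-m}\theta(x)$ for $m\geq0$. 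In this notation $\mathfrak{k}_{\vec a}(\tau)=\be((a_1-1)\tau'/2)\,\theta(x)/(q;q)_\infty^2$, and the two assertions become bookkeeping problems about how $x$, $q$, and the fractional prefactor respond to the two kinds of substitution.

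For the translation formula I would substitute $\vec a\mapsto\vec a+(n_1,n_2)$, under which $\tau'\mapsto\tau'+n_1\tau+n_2$ and hence $x\mapsto x\,q^{n_1}$ (using $\be(n_2)=1$). The theta-quotient then contributes $(-1)^{n_1}q^{-\binom{n_1}{2}}x^{-n_1}$, while the prefactor contributes $\be\!\big(\tfrac12[(a_1+n_1-1)(\tau'+n_1\tau+n_2)-(a_1-1)\tau']\big)$. The essential subtlety is that the integer $n_2$ does \emph{not} disappear here: although $\be(n_2)=1$ inside $x$, in the prefactor it is multiplied by the fractional exponent $(a_1+n_1-1)/2$ and so survives. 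Substituting $\tau'=a_1\tau+a_2$ and collecting terms, all powers of $\tau$ cancel and the surviving constant exponent is $(a_1n_2-a_2n_1+n_1n_2-n_2)/2$; splitting off $\be((n_1n_2-n_2)/2)=(-1)^{n_1n_2+n_2}$ and combining with the $(-1)^{n_1}$ above yields exactly $(-1)^{n_1+n_2+n_1n_2}\be((a_1n_2-a_2n_1)/2)$.

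For the $\SL_2(\mbz)$ formula I would use that both $\gamma\mapsto\vec a\cdot\gamma$ and the weight $-1$ slash operator $\mathfrak{k}|_\gamma(\tau)=(c\tau+d)\,\mathfrak{k}(\gamma\tau)$ are compatible with composition, so it suffices to check the identity on the generators $T=\left(\begin{smallmatrix}1&1\\0&1\end{smallmatrix}\right)$ and $S=\left(\begin{smallmatrix}0&-1\\1&0\end{smallmatrix}\right)$. The case $\gamma=T$ is immediate: the slash factor is $1$, and $\tau\mapsto\tau+1$ fixes $q$ while sending $\tau'$ to $\tau'+a_1$, which is precisely the $\tau'$-datum attached to $\vec a\cdot T=(a_1,a_1+a_2)$, so \eqref{Klein} gives $\mathfrak{k}_{\vec a}(\tau+1)=\mathfrak{k}_{(a_1,a_1+a_2)}(\tau)$ term by term. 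The case $\gamma=S$ is the main obstacle: one must prove $\tau\,\mathfrak{k}_{\vec a}(-1/\tau)=\mathfrak{k}_{(a_2,-a_1)}(\tau)$, which no longer follows from a bare $q$-expansion because $\tau\mapsto-1/\tau$ mixes $q$ and $q_{\tau'}$ essentially. Here I would import the transformation law $\eta(-1/\tau)=\sqrt{-i\tau}\,\eta(\tau)$ to control the denominator $(q;q)_\infty^2$ together with the modular inversion formula for the Jacobi theta function governing the numerator $\be((a_1-1)\tau'/2)\,\theta(x)$; equivalently, one recognizes \eqref{Klein} as the classical Klein form $e^{-\eta(z)z/2}\sigma(z)$ attached to the lattice $\mbz\tau+\mbz$ and the point $z=a_1\tau+a_2$, whose invariance under the simultaneous $\SL_2(\mbz)$-action on $(z;\tau)$ is standard (see \cite{Lang1987}). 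The delicate point in either route is to verify that the $\sqrt{-i\tau}$ from the eta-multiplier and the automorphy factor from theta-inversion combine to give exactly the weight factor $c\tau+d=\tau$, with the accompanying eighth-roots of unity cancelling so that no spurious constant remains.
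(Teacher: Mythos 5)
The paper itself contains no proof of this proposition: it is stated as a recalled fact with a pointer to Lang \cite{Lang1987}, so your attempt is being compared against a bare citation. Your proof of the second (translation) formula is complete, correct, and a genuine addition to what the paper provides. Reading the prefactor as $\be\left((a_1-1)\tau'/2\right)$, using $\theta(qx)=-x^{-1}\theta(x)$, and collecting exponents, I confirm your bookkeeping: the surviving constant exponent is $\frac{a_1n_2-a_2n_1+n_1n_2-n_2}{2}$, which together with the $(-1)^{n_1}$ from the theta-quotient gives exactly $(-1)^{n_1+n_2+n_1n_2}\be\left(\frac{a_1n_2-a_2n_1}{2}\right)$; your observation that $n_2$ survives only through the fractional prefactor is precisely the point that makes the formula nontrivial. (Minor remark: you state $\theta(q^mx)=(-1)^mq^{-\binom{m}{2}}x^{-m}\theta(x)$ for $m\geq 0$, but $n_1$ may be negative; the identity holds for all $m\in\mbz$, so this costs nothing.)

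The gap is the $S$-case of the first formula, which is the entire analytic content of the $\SL_2(\mbz)$-equivariance. Your reduction to generators is valid (the slash cocycle and the right action compose, and you check the identity for arbitrary $\vec a$, as the induction requires), and the $T$-case is correct. But for $S$ you offer two routes and execute neither. The second route, identifying \eqref{Klein} with $e^{-\eta(z)z/2}\sigma(z)$ and citing Lang for its invariance, is essentially circular: that invariance \emph{is} the statement being proved, so the real content of this route is the identification of the $q$-product with the sigma expression, which you also do not prove. The first route is sound and does close, but it must be written down: one has $\mathfrak{k}_{\vec a}(\tau)=-i\,\be\left(\frac{a_1z}{2}\right)\theta_1(z|\tau)/\eta^3(\tau)$ with $z=a_1\tau+a_2$; applying $\eta(-1/\tau)=\sqrt{-i\tau}\,\eta(\tau)$ and $\theta_1\left(\frac{w}{\tau}\Big|-\frac{1}{\tau}\right)=-i\sqrt{-i\tau}\,e^{\pi iw^2/\tau}\,\theta_1(w|\tau)$ with $w=a_2\tau-a_1$, the roots of unity and half-integral powers of $-i\tau$ combine to give $\tau\,\mathfrak{k}_{\vec a}(-1/\tau)=-i\,e^{\pi i(a_1w+w^2)/\tau}\,\theta_1(w|\tau)/\eta^3(\tau)$, and the leftover exponential equals $\be\left(\frac{a_2w}{2}\right)$ because $a_1w+w^2=w(a_1+w)=a_2\tau w$, which is exactly $\mathfrak{k}_{(a_2,-a_1)}(\tau)=\mathfrak{k}_{\vec a\cdot S}(\tau)$. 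This five-line verification is precisely the cancellation you predicted but deferred; as submitted, your proposal proves the translation law and the $T$-case, and for the crux either points to an unperformed computation or to the same reference the paper cites.
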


\begin{lem}\label{lem:etaell}
Let $\ell$ be a prime number, then we have
\begin{equation}
    \frac{\eta^{\ell}(\ell\tau)}{\eta(\tau)}\in M_{\frac{\ell-1}{2}}\left(\Gamma_0(\ell),\left(\frac{\ast}{\ell}\right)\right).
\end{equation}
\end{lem}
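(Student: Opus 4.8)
The plan is to recognize the left-hand side as the eta-quotient $f(\tau)=\prod_{\delta\mid\ell}\eta(\delta\tau)^{r_\delta}$ on $\Gamma_0(\ell)$, with $N=\ell$, $r_1=-1$ and $r_\ell=\ell$, and then to invoke the standard modularity criterion for eta-quotients (Ono \cite{Ono}, Theorems~1.64 and~1.65). The candidate weight is $k=\tfrac12\sum_{\delta\mid\ell}r_\delta=\tfrac12(-1+\ell)=\tfrac{\ell-1}{2}$, which is exactly the weight asserted, and is an integer since $\ell$ is odd. So the work splits into three checks: the two divisibility conditions guaranteeing that $f$ transforms with a Dirichlet character under $\Gamma_0(\ell)$, the identification of that character with $\left(\frac{\ast}{\ell}\right)$, and holomorphy at the cusps.

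First I would verify the two congruence conditions. One computes $\sum_{\delta\mid\ell}\tfrac{\ell}{\delta}r_\delta=-\ell+\ell=0$, so this condition holds trivially; and $\sum_{\delta\mid\ell}\delta r_\delta=-1+\ell^2=\ell^2-1$, which is divisible by $24$ precisely because $\ell$ is coprime to $6$ (true for every prime $\ell\ge 5$, the range relevant to our applications). With both conditions in hand, the criterion yields
\[
f\!\left(\tfrac{a\tau+b}{c\tau+d}\right)=\chi(d)\,(c\tau+d)^{k}f(\tau),\qquad \begin{pmatrix}a&b\\ c&d\end{pmatrix}\in\Gamma_0(\ell),
\]
with $\chi(d)=\left(\frac{(-1)^{k}s}{d}\right)$ and $s=\prod_{\delta\mid\ell}\delta^{r_\delta}=\ell^{\ell}$.

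Next I would simplify the character. Multiplicativity of the Kronecker symbol in the numerator gives $\left(\frac{\ell^{\ell}}{d}\right)=\left(\frac{\ell}{d}\right)$ (odd exponent) and $\left(\frac{(-1)^{k}}{d}\right)=(-1)^{\frac{d-1}{2}\frac{\ell-1}{2}}$. Quadratic reciprocity then rewrites $\left(\frac{\ell}{d}\right)$ as $(-1)^{\frac{\ell-1}{2}\frac{d-1}{2}}\left(\frac{d}{\ell}\right)$, and the two sign factors cancel, leaving $\chi(d)=\left(\frac{d}{\ell}\right)$, i.e. exactly the Legendre character $\left(\frac{\ast}{\ell}\right)$ modulo $\ell$.

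Finally, for holomorphy I would apply the cusp-order formula: at a cusp $c/d$ with $d\mid\ell$ the order of vanishing of $f$ is a positive multiple of $\sum_{\delta\mid\ell}\frac{\gcd(d,\delta)^2}{\delta}r_\delta$. Since $\Gamma_0(\ell)$ has exactly the two cusps $\infty$ and $0$ (corresponding to $d=\ell$ and $d=1$), one checks the order is $\tfrac{\ell^2-1}{24}>0$ at $\infty$ (consistent with the leading term $q^{(\ell^2-1)/24}$ of the product expansion $f=q^{(\ell^2-1)/24}\prod_{n\ge1}\frac{(1-q^{\ell n})^{\ell}}{1-q^{n}}$) and $0$ at $0$; both are nonnegative, so $f$ is holomorphic at every cusp, placing it in $M_{(\ell-1)/2}\!\left(\Gamma_0(\ell),\left(\frac{\ast}{\ell}\right)\right)$. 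The main obstacle here is not conceptual but bookkeeping: correctly normalizing the cusp-order formula, and above all carrying the quadratic-reciprocity manipulation so that the sign factors cancel and the Kronecker symbol collapses to the advertised Legendre symbol; one must also take a little care with signs when $d$ is negative, which is resolved by reducing $d$ modulo $\ell$.
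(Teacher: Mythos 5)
Your proof is correct, but there is nothing in the paper to compare it against: the paper states this lemma in its preliminaries without proof, under the blanket citation to Ono's book (it is the standard eta-quotient lemma, the same one underlying Mahlburg's construction). What you wrote is precisely the argument that citation stands for: view $\eta^{\ell}(\ell\tau)/\eta(\tau)$ as the eta-quotient with $N=\ell$, $r_1=-1$, $r_\ell=\ell$; check the two criteria $\sum_{\delta\mid\ell}\tfrac{\ell}{\delta}r_\delta=0$ and $\sum_{\delta\mid\ell}\delta r_\delta=\ell^2-1\equiv 0\pmod{24}$; collapse the Nebentypus $\left(\frac{(-1)^{(\ell-1)/2}\ell^{\ell}}{d}\right)=\left(\frac{(-1)^{(\ell-1)/2}\ell}{d}\right)=\left(\frac{d}{\ell}\right)$ by quadratic reciprocity, the square factor $\ell^{\ell-1}$ dropping out since $\gcd(d,\ell)=1$ for matrices in $\Gamma_0(\ell)$, and negative $d$ being handled because $(-1)^{(\ell-1)/2}\ell\equiv 1\pmod 4$ makes the symbol a character of modulus $\ell$; then verify holomorphy via the cusp-order formula, giving order $\tfrac{\ell^2-1}{24}$ at $\infty$ and order $0$ at the cusp $0$. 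All of these computations check out. The one substantive caveat, which you flag yourself, is that the argument genuinely requires $24\mid \ell^2-1$, i.e.\ $\ell\ge 5$: for $\ell=2,3$ the quotient has a fractional power of $q$ in its expansion (exponent $\tfrac18$, resp.\ $\tfrac13$), so the lemma as stated ``for any prime $\ell$'' is not merely unprovable by this method but false in those cases. This is harmless for the paper, whose applications all take $\ell\ge 5$, but strictly speaking the hypothesis $\ell\ge 5$ (equivalently $\gcd(\ell,6)=1$) should be added to the lemma, and your proof should state it as a hypothesis rather than an aside.
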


\begin{lem}\label{lem:Et}
For integer $j\geq 2$, let
\begin{equation*}
    E_j(\tau)=\frac{\eta^{\ell^j}(\tau)}{\eta(\ell^j\tau)}.
\end{equation*}
Then we have $E_j(\tau)\in M_{\frac{\ell^j-1}{2}}(\Gamma_0(\ell^j),\chi_j)$ where $\chi_j=\left(\frac{(-1)^{\frac{\ell^j-1}{2}}\ell^j}{\ast}\right)$. Moreover, $E_j(\tau)$ vanishes at every cusp $\frac{a}{c}$ with $\ell^j$ not dividing $c$.
\end{lem}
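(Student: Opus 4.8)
The plan is to recognize $E_j(\tau)=\eta(\tau)^{\ell^j}/\eta(\ell^j\tau)$ as an eta-quotient $\prod_{\delta\mid N}\eta(\delta\tau)^{r_\delta}$ of level $N=\ell^j$ with exponents $r_1=\ell^j$, $r_{\ell^j}=-1$ and all other $r_\delta=0$, and then to invoke the standard modularity and cusp-order criteria for eta-quotients (the theorems of Gordon--Hughes--Newman and Ligozat as recorded in \cite{Ono}). Since $\eta$ is holomorphic and non-vanishing on the upper half-plane $\mbh$, the quotient $E_j$ is holomorphic and non-vanishing on $\mbh$, so only the cusps and the transformation law need attention. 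Its weight is $k=\tfrac12\sum_\delta r_\delta=\tfrac{\ell^j-1}{2}$, which is a positive integer because $\ell$ is odd.

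First I would verify the two congruences that guarantee invariance under $\Gamma_0(\ell^j)$, namely $\sum_{\delta\mid N}\delta r_\delta\equiv 0$ and $\sum_{\delta\mid N}(N/\delta)r_\delta\equiv 0\pmod{24}$. Here $\sum_{\delta\mid N}\delta r_\delta=\ell^j-\ell^j=0$ is immediate, while $\sum_{\delta\mid N}(N/\delta)r_\delta=\ell^{2j}-1$, and for the latter I would use that $\ell$ is a prime with $\gcd(\ell,6)=1$ (as forced by the ambient hypothesis $\ell\geq 5$), whence $\ell^2\equiv 1\pmod{24}$ and therefore $\ell^{2j}-1\equiv 0\pmod{24}$. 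With these conditions satisfied, the criterion yields that $E_j$ transforms under $\Gamma_0(\ell^j)$ with weight $\tfrac{\ell^j-1}{2}$ and Nebentypus $d\mapsto\left(\tfrac{(-1)^{k}s}{d}\right)$, where $s=\prod_{\delta}\delta^{r_\delta}=\ell^{-j}$. Since $\left(\tfrac{\ell^{-j}}{\ast}\right)=\left(\tfrac{\ell^{j}}{\ast}\right)$ (they differ by the square $\ell^{-2j}$) and $(-1)^{k}=(-1)^{(\ell^j-1)/2}$, this character is exactly $\chi_j=\left(\tfrac{(-1)^{(\ell^j-1)/2}\ell^{j}}{\ast}\right)$.

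It remains to control the cusps, which is the real content of the lemma. Every cusp of $\Gamma_0(\ell^j)$ is equivalent to one of the form $a/c$ with $c=\ell^{i}$, $0\le i\le j$, and since the order of vanishing is an invariant of the cusp class it suffices to treat these. The Ligozat order formula (\cite{Ono}) gives
\[
\frac{N}{24}\sum_{\delta\mid N}\frac{\gcd(c,\delta)^2\,r_\delta}{\gcd(c,N/c)\,c\,\delta}
=\frac{\ell^{\,j-\min(i,\,j-i)}}{24}\bigl(\ell^{\,j-i}-\ell^{\,i-j}\bigr).
\]
Because $0\le i\le j$ we have $\ell^{\,j-i}\ge \ell^{\,i-j}$, so every cusp order is $\ge 0$; this shows $E_j$ is holomorphic at all cusps and hence a genuine holomorphic modular form of the stated weight, level and character. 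Moreover the order is strictly positive precisely when $i<j$, that is, when $\ell^j\nmid c$, which establishes the claimed vanishing (the unique cusp with order $0$ being $\infty$, where $c=\ell^j$).

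I expect the only genuinely delicate steps to be the correct identification of the Nebentypus (tracking $s=\ell^{-j}$ versus $\ell^{j}$ together with the sign $(-1)^{k}$) and the cusp-order bookkeeping above; the arithmetic input $\ell^{2j}\equiv 1\pmod{24}$ is precisely where the coprimality of $\ell$ with $6$ enters, and I would flag that this is the only place the size hypothesis on $\ell$ is used in the lemma.
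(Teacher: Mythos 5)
Your proof is correct and is essentially the argument the paper intends: the paper states this lemma without proof (deferring to \cite{Ono} in the preamble of Section \ref{Sec-2}), and your route via the eta-quotient criteria of Gordon--Hughes--Newman and Ligozat (Theorems 1.64 and 1.65 of \cite{Ono}) — checking the two congruences mod $24$, identifying the Nebentypus from $s=\prod_\delta \delta^{r_\delta}$, and computing the cusp orders $\frac{\ell^{\,j-\min(i,j-i)}}{24}\bigl(\ell^{\,j-i}-\ell^{\,i-j}\bigr)$ — is exactly the machinery the paper itself invokes later (Theorem 1.65 of \cite{Ono} appears explicitly in the proof of Lemma \ref{lem:gm}). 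Your observation that the step $\ell^{2j}-1\equiv 0\pmod{24}$ requires $\gcd(\ell,6)=1$, i.e.\ $\ell\geq 5$, is also correct and worth flagging, since the lemma as stated leaves that hypothesis implicit.
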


\section{Proof of Theorem \ref{main-thm-M}}\label{Sec-3}

The following lemmas play central roles in the proof of Theorem \ref{main-thm-M}.
\begin{lem}\label{lem:gausssum}
Let $e_{\ell}=\sum_{n=1}^{\ell-1}\left(\frac{n}{\ell}\right)e^{2\pi in/\ell}$ be the Gauss sum. Then the twisted modular form has the following expression.
\begin{align}\label{lem:gausssum-eq}
f(z)\otimes \left(\frac{\ast}{\ell}\right)=\frac{e_{\ell}}{\ell}\sum_{n=1}^{\ell-1}\left(\frac{n}{\ell}\right)f(\tau)\left|\begin{pmatrix}
1&-\frac{n}{\ell}\\0&1
\end{pmatrix}\right..
\end{align}
\end{lem}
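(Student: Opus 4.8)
The plan is to unfold both sides of \eqref{lem:gausssum-eq} into $q$-expansions and reduce the identity to the classical evaluation of a quadratic Gauss sum. Since the matrix $\begin{pmatrix} 1 & -\frac{n}{\ell} \\ 0 & 1 \end{pmatrix}$ is upper triangular with determinant $1$ and lower-left entry $0$, the weight-$k$ slash action is simply translation and carries no theta-multiplier, so that
\[
f(\tau)\left|\begin{pmatrix} 1 & -\frac{n}{\ell} \\ 0 & 1 \end{pmatrix}\right. = f\!\left(\tau - \frac{n}{\ell}\right) = \sum_{m\geq 0} a(m)\, e^{-2\pi i mn/\ell}\, q^m ,
\]
where $f=\sum_{m\geq 0}a(m)q^m$. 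First I would substitute this expansion into the right-hand side of \eqref{lem:gausssum-eq}, interchange the (finite outer, absolutely convergent inner) sums, and collect the coefficient of $q^m$, obtaining
\[
\frac{e_\ell}{\ell}\sum_{m\geq 0} a(m)\, q^m \sum_{n=1}^{\ell-1}\left(\frac{n}{\ell}\right) e^{-2\pi i mn/\ell}.
\]
Everything then reduces to evaluating the inner sum $S_m=\sum_{n=1}^{\ell-1}\left(\frac{n}{\ell}\right) e^{-2\pi i mn/\ell}$.

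Next I would invoke the standard identity $\sum_{n \bmod \ell}\left(\frac{n}{\ell}\right) e^{2\pi i an/\ell} = \left(\frac{a}{\ell}\right) e_\ell$, valid for every residue $a$ (both sides vanish when $\ell\mid a$), which follows for $\gcd(a,\ell)=1$ by the substitution $n\mapsto \overline{a}\,n$ together with $\left(\frac{\overline a}{\ell}\right)=\left(\frac{a}{\ell}\right)$, and trivially otherwise. Applying it with $a=-m$ gives $S_m=\left(\frac{-m}{\ell}\right) e_\ell=\left(\frac{-1}{\ell}\right)\left(\frac{m}{\ell}\right) e_\ell$. Substituting back, the right-hand side of the lemma becomes $\frac{e_\ell^2}{\ell}\left(\frac{-1}{\ell}\right)\sum_{m\geq 0}\left(\frac{m}{\ell}\right) a(m)\, q^m$.

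Finally I would use the evaluation of the square of the quadratic Gauss sum, namely $e_\ell^2=\left(\frac{-1}{\ell}\right)\ell$, to collapse the scalar prefactor: $\frac{e_\ell^2}{\ell}\left(\frac{-1}{\ell}\right)=\left(\frac{-1}{\ell}\right)^2=1$. The right-hand side therefore equals $\sum_{m\geq 0}\left(\frac{m}{\ell}\right) a(m)\, q^m$, which is precisely $f\otimes\left(\frac{\ast}{\ell}\right)$ by definition, completing the identity. The only delicate point is the bookkeeping of the factors $\left(\frac{-1}{\ell}\right)$: one copy arises from the negative sign in the exponent $e^{-2\pi i mn/\ell}$ and one from the Gauss-sum square, and it is exactly their cancellation that forces the normalization $e_\ell/\ell$ in the statement. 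This sign tracking, rather than any analytic difficulty, is the single step where an error would invalidate the formula.
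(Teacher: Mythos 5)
Your proposal is correct and follows essentially the same route as the paper's own proof: expand the slash translates $f(\tau-n/\ell)$ into $q$-series, evaluate the inner character sum as $\left(\frac{-m}{\ell}\right)e_\ell$, and collapse the prefactor via $e_\ell^2=\left(\frac{-1}{\ell}\right)\ell$. Your version is simply more careful than the paper's three-line computation, spelling out the triviality of the automorphy factor for upper-triangular unipotent matrices and the vanishing of both sides when $\ell\mid m$.
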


\begin{proof}
The right hand side of \eqref{lem:gausssum-eq} can be written as
\begin{align*}
    &\frac{e_{\ell}}{\ell}\sum_{n=1}^{\ell-1}\sum_{m\geq 1}a(m)\left(\frac{n}{\ell}\right)\be\left(2\pi im\left(\tau-\frac{n}{\ell}\right)\right)\\
    &\quad=\frac{e_{\ell}}{\ell}\sum_{m\geq 1}e_{\ell}a(m)\left(\frac{-m}{\ell}\right)q^m\\
    &\quad=\sum_{m\geq 1}a(m)\left(\frac{m}{\ell}\right)q^m,
\end{align*}
which completes the proof.
\end{proof}
Consider a positive integer $N$ and set $\zeta=e^{2\pi i/N}$ in \eqref{GF}.
For any residue class $m \pmod{N}$, elementary calculations give the generating function for the crank
\begin{align}\label{GF-M}
\sum_{n=0}^\infty M_{r,d,t}(m,N,n)q^n
&=\frac{1}{N}\sum_{s=0}^{N-1}\mathfrak{F}_{r,d,t}(\zeta^s,q)\zeta^{-ms}
\nonumber\\[3pt]
&=\frac{1}{N}\sum_{s=0}^{N-1}\zeta^{-ms}
\left(\prod_{n=1}^\infty\frac{(1-q^{rn})^{d}}{(1-q^n)^{t}(1-\zeta^sq^n)(1-\zeta^{-s}q^n)}\right).
\end{align}
Substituting the definition of the Klein form \eqref{Klein} into \eqref{GF-M}, we consider the following series
\begin{equation*}
\begin{split}
    g_m(\tau)&=\frac{1}{2\pi i}\sum_{s=1}^{N-1}\frac{\omega_s\zeta^{-ms}}{\mathfrak{k}_{(0,\frac{s}{N})}(\tau)}\frac{\eta^{t\ell}(\ell\tau)\eta^{d\ell^v}(r\tau)}{\eta^{t}(\tau)}+\frac{\eta^{t\ell}(\ell\tau)\eta^{d\ell^v}(r\tau)}{\eta^{t}(\tau)}\\
    &:=G_m(\tau)+P(\tau)
    \end{split}
\end{equation*}
where $\omega_s=\zeta^{s/2}(1-\zeta^{-s})$.

\begin{lem}\label{lem:gm}
Let $r,d,t,\ell,v$ defined as before, then we have
\begin{equation}
    \frac{\widetilde{G_m}(24\tau)}{\eta^{t\ell}(24\ell \tau)\eta^{d \ell^v}(24r\tau)}E_{j+1}(24\tau)^{\ell^v}\in S_{\lambda+1/2}(\Gamma_0(576r\ell^{j+1}),\chi)
\end{equation}
for some integer $\lambda$.
\end{lem}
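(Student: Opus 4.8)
The plan is to exhibit the displayed expression as a holomorphic half-integral weight form on $\Gamma_0(576r\ell^{j+1})$ that vanishes at every cusp, following Mahlburg's strategy. First I would record the modular data of the three building blocks. By Proposition~\ref{prop:klein}, each reciprocal Klein form $1/\mathfrak{k}_{(0,s/N)}$ (with $N=\ell^j$) transforms under $\SL_2(\mbz)$ via $\mathfrak{k}_{\vec a}|_\gamma=\mathfrak{k}_{\vec a\cdot\gamma}$ up to the explicit root of unity of the second formula in that proposition, so the weighted sum $G_m$, multiplied against the eta quotient $\eta^{t\ell}(\ell\tau)\eta^{d\ell^v}(r\tau)/\eta^t(\tau)$, is a weakly holomorphic form whose weight and Nebentypus are read off from the standard eta-multiplier together with the weight $+1$ of each $1/\mathfrak{k}$. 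The substitution $\tau\mapsto 24\tau$ clears the $24$-th root multiplier systems carried by the eta factors and the Klein forms; this is exactly what forces the factor $576=24^2$ into the level, and tracking the total power of $\eta$ against the reciprocal Klein contribution gives the asserted weight $\lambda+\tfrac12$.

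Next I would apply the twist. By Lemma~\ref{lem:gausssum}, $\widetilde{G_m}=G_m-\epsilon\,G_m\otimes\left(\tfrac{\ast}{\ell}\right)$ is a finite $\C$-linear combination of the slashes $G_m\,\big|\,\left(\begin{smallmatrix}1&-n/\ell\\0&1\end{smallmatrix}\right)$ for $1\le n\le\ell-1$; each slash preserves modularity, and by the twisting rule $f\otimes\psi\in M_k(NM^2,\chi\psi^2)$ with $M=\ell$ it enlarges the level by at most $\ell^2$ and alters the character only by $\left(\tfrac{\ast}{\ell}\right)^2$. Assembling the $576$ from the substitution, the $r$ from $\eta(r\tau)$, the powers of $\ell$ contributed by the order-$\ell^j$ Klein forms, the twist, and the factor $E_{j+1}(24\tau)^{\ell^v}$ from Lemma~\ref{lem:Et}, and dividing out the eta quotient $\eta^{t\ell}(24\ell\tau)\eta^{d\ell^v}(24r\tau)$ (holomorphic and nonvanishing on $\mathbb{H}$), yields a form of weight $\lambda+\tfrac12$ with a determined Nebentypus $\chi$ whose level divides $576r\ell^{j+1}$. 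Holomorphy in the interior is automatic here, since $\eta$ has no zeros on $\mathbb{H}$ and the zeros of the Klein forms are confined to the cusps.

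The crux, and the step I expect to be the main obstacle, is the cusp analysis showing the quotient is holomorphic and in fact vanishes at every cusp. I would split the cusps $a/c$ of $\Gamma_0(576r\ell^{j+1})$ into two families. When $\ell^{j+1}\nmid c$, Lemma~\ref{lem:Et} guarantees that $E_{j+1}$ vanishes there, so $E_{j+1}^{\ell^v}$ vanishes to an order growing with $\ell^v$; after computing, from the transformation formulas, the orders contributed by the reciprocal Klein forms and by the factor $\eta^{-t}(24\tau)$, one sees that this zero dominates every potential pole, forcing strict vanishing. The delicate family is $\ell^{j+1}\mid c$ together with $\infty$, where $E_{j+1}$ gives no help; here one must verify that the leading term of $\widetilde{G_m}$ genuinely cancels. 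This is precisely where the twist is designed to act: the relevant leading coefficient is multiplied by $1-\epsilon\left(\tfrac{\,\cdot\,}{\ell}\right)$, which is made to vanish by the appropriate choice of $\epsilon$, and this is the point at which the hypotheses $\ell>dr^2$ and the constancy of $\left(\tfrac{g^2d-tr}{\ell}\right)$ over $g\mid r$ are used to control the leading coefficient uniformly. Granting this coefficient vanishing, the order at each such cusp is strictly positive, and combining the two families places the quotient in $S_{\lambda+1/2}(\Gamma_0(576r\ell^{j+1}),\chi)$.
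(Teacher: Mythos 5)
Your proposal follows essentially the same route as the paper's proof: the same two-family cusp split, with the zero of $E_{j+1}(24\tau)^{\ell^v}$ dominating the pole of the eta-quotient denominator at cusps $a/c$ with $\ell^{j+1}\nmid c$, and the twist producing a factor $1-\epsilon\left(\frac{\cdot}{\ell}\right)$ that kills the leading coefficient at cusps with $\ell^{j+1}\mid c$, the constancy of $\left(\frac{g^2d-tr}{\ell}\right)$ over $g=\gcd(c,r)\mid r$ entering exactly where you place it. The only small slip is the role of $\ell>dr^2$: the paper uses it chiefly in the first family, to ensure $\ord_{a/c}\,\eta^{t\ell}(\ell\tau)\eta^{d\ell^v}(r\tau)=\frac{t\ell^3+dr^2\ell^v}{24}<\frac{\ell^2-1}{24}\ell^v$ so that the zero of $E_{j+1}^{\ell^v}$ dominates, rather than in the leading-coefficient cancellation.
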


\begin{proof}
By the Lemma \ref{lem:Et} and the Theorem 1.65 of \cite{Ono}, we can show that the order of $E_{j+1}(\tau)$ at the cusp $\frac{a}{c}$ with $\ell^{j+1}$ not dividing $c$ is at least $\frac{\ell^2-1}{24}$. On the other hand, we have
\begin{equation*}
    \ord_{\frac{a}{c}}\eta^{t\ell}(\ell\tau)\eta^{d\ell^v}(r\tau)=\frac{t\ell^3+dr^2\ell^v}{24}< \frac{\ell^2-1}{24}\ell^v.
\end{equation*}

Hence it is enough to show that $\frac{\widetilde{G_m}(\tau)}{\eta^{k\ell}(\ell\tau)\eta^{d\ell^v}(r\tau)}$ vanishes at each cusp $\frac{a}{c}$ with $\ell^{j+1}|c$. Now we choose suitable integers $b,d$ such that $A=\begin{pmatrix}a&b\\c&d\end{pmatrix}\in \Gamma_0(\ell^{j+1})$.

Now we compute the order of $G_m$ at the cusp $\frac{a}{c}$. To do that, let $g=\gcd(c,r)$, $a_1=ra/g$, $c_1=c/g$. It is easy to see $\gcd(a_1,c_1)=1$, so we can choose integers $b_1,d_1$ such that $B=\begin{pmatrix}a_1&b_1\\c_1&d_1\end{pmatrix}\in \SL_2(\mbz)$. Then
\begin{align*}
&\Delta(r\tau)|_{12}A=(c\tau+d)^{12}\Delta\left(\frac{ar\tau+br}{c\tau+d}\right)\\
=&(c\tau+d)^{12}\Delta\left(B\left(\frac{g\tau+h}{r/g}\right)\right)\\
=&\Delta\left(\frac{g\tau+h}{r/g}\right),
\end{align*}
where $h=rbd_1-b_1d$. On the other hand, by the Lemma \ref{lem:etaell}, we see that $\frac{\eta^\ell(\ell\tau)}{\eta(\tau)}$ belongs to $M_{\frac{\ell-1}{2}}\left(\Gamma_0(\ell),\left(\frac{\ast}{\ell}\right)\right)$. By Proposition \ref{prop:klein}, we have
\begin{equation*}
    \mathfrak{k}_{(0,\frac{s}{N})}|A(\tau)=\be\left(\frac{N(cs+ds-\overline{ds})+cds^2}{2N^2}\right)
    \mathfrak{k}_{(0,\frac{\overline{ds}}{N})}(\tau),
\end{equation*}
where $\overline{ds}$ is the unique integer between $0$ and $N-1$ such that $N|ds-\overline{ds}$.
Hence combining the discussion above, we get
\begin{equation*}
    G_m(\tau)|A=\frac{1}{2\pi i}\sum_{s=1}^{N-1}\frac{w_s\zeta^{-ms}}{e_N(c,d,s)
    \mathfrak{k}_{(0,\frac{\overline{ds}}{N})}(\tau)}\left(\frac{d}{\ell}\right)^t
    \frac{\eta^{t\ell}(\ell\tau)}{\eta^t(\tau)}\Delta^{(d\ell^v+d)/24}\left(\frac{g\tau+h}{r/g}\right),
\end{equation*}
where $e_N(c,d,s)=\be\left(\frac{N(cs+ds-\overline{ds})+cds^2}{2N^2}\right)$.
Next we consider $G_m\otimes \left(\frac{\ast}{\ell}\right)$. For integer $0\leq u\leq \ell-1$, we choose $u'$ such that $u'\equiv d^2u\pmod{\ell}$. Then we have
\begin{equation*}
    \begin{pmatrix}1&-u/\ell\\ 0&1\end{pmatrix}\begin{pmatrix}a&b\\ c&d\end{pmatrix}=\begin{pmatrix}a-cu/\ell&b-cuu'/\ell^2+(au'-du)/\ell\\ c&d+cu'/\ell\end{pmatrix}\begin{pmatrix}1&-u'/\ell\\ 0&1\end{pmatrix}
\end{equation*}
With the help of Lemma $\ref{lem:gausssum}$ we can show that
\begin{equation*}
    G_m\otimes \left(\frac{\ast}{\ell}\right)|A=\frac{e_\ell}{2\pi i\ell}\sum_{s=1}^{N-1}\sum_{u=1}^{\ell}\frac{w_s\zeta^{-ms}}{e_N(c,d',s)'\mathfrak{k}_{(0,\frac{\overline{d's}}{N})}(\tau)}\left(\frac{d'^tu}{\ell}\right)\frac{\eta^{t\ell}(\ell\tau)}{\eta^t(\tau)}\Delta^{(d\ell^v+d)/24}\left(\frac{g\tau+h_s}{r/g}\right)
\end{equation*}
where $d'=d+cu'/\ell$ and $h_s$ are integers. We compare the first nonzero coefficient of $G_m(\tau)|A$ and $G_m\otimes \left(\frac{\ast}{\ell}\right)|A$. In fact, if we assume that
\begin{equation*}
    G_m(\tau)|A=a_nq^n+a_{n+1}q^{n+1}+\cdots
\end{equation*}
where $n=t\delta_{\ell}+\frac{g^2(d\ell^v+d)}{24r}$. Then since $r$ divides $d\ell^v+d$, the expansion of $G_m\otimes \left(\frac{\ast}{\ell}\right)|A$ is
\begin{equation*}
    a_n\left(\frac{n}{\ell}\right)q^n+a_{n+1}'q^{n+1}+\cdots
\end{equation*}
By the assumption on $\ell$, this implies that
\begin{equation*}
    \ord_{\frac{a}{c}}\widetilde{G_m}(\tau)\geq t\delta_{\ell}+\frac{g^2(d\ell^v+d)}{24r}+1.
 \end{equation*}
By comparing the order, we see that $\frac{\widetilde{G_m}(\tau)}{\eta^{k\ell}(\ell\tau)\eta^{d\ell^v}(r\tau)}$ also vanishes at the cusp $\frac{a}{c}$ when $\ell^{j+1}|c$.
\end{proof}

The proof of following lemma is similar to Lemma \ref{lem:gm}, so we omit it.
\begin{lem}\label{lem:p}
Let $v$ be an integer large enough, then we have
\begin{equation}
    \frac{\tilde{P}(24\tau)}{\eta^{t\ell}(24\ell \tau)\eta^{d\ell^v}(24r\tau)}E_{j+1}(24\tau)^{\ell^v}\in S_{\lambda'+1/2}(\Gamma_0(576\ell^{\max\{3,j+1\}}),\chi).
\end{equation}
for some integer $\lambda'$.
\end{lem}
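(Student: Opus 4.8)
The plan is to follow the proof of Lemma \ref{lem:gm} essentially verbatim, the simplification being that $P$ carries no Klein-form factor, so every cusp expansion is a pure $\eta$-quotient and the bookkeeping is lighter. I write $\tilde P = P - \epsilon\,P\otimes\left(\frac{\ast}{\ell}\right)$. Since the paper's twisting formula gives $f\otimes\psi\in M_k(\Gamma_0(N\ell^2),\chi\psi^2)$ and $P$ is a weakly holomorphic $\eta$-quotient of half-integer weight, $\tilde P$ is again weakly holomorphic of the same weight on a level divisible by $\ell^2$ times that of $P$; the scaling $\tau\mapsto 24\tau$, responsible for the factor $576=24^2$, clears the $24$-th roots in the $\eta$- and $E$-exponents. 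It therefore suffices to show that $\frac{\tilde P(24\tau)}{\eta^{t\ell}(24\ell\tau)\eta^{d\ell^v}(24r\tau)}E_{j+1}(24\tau)^{\ell^v}$ is holomorphic and vanishes at every cusp, and I split the cusps $\frac ac$ according to whether $\ell^{j+1}\nmid c$ or $\ell^{j+1}\mid c$.

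For the cusps with $\ell^{j+1}\nmid c$ the argument is identical to Lemma \ref{lem:gm}: by Lemma \ref{lem:Et} and Theorem 1.65 of \cite{Ono}, $E_{j+1}$ vanishes there to order at least $\frac{\ell^2-1}{24}$, so $E_{j+1}^{\ell^v}$ contributes vanishing of order at least $\frac{\ell^2-1}{24}\ell^v$, whereas the denominator $\eta^{t\ell}(\ell\tau)\eta^{d\ell^v}(r\tau)$ contributes a pole of order at most $\frac{t\ell^3+dr^2\ell^v}{24}$. Because $\ell>dr^2$ forces $\ell^2-1>dr^2$, the coefficient of $\ell^v$ on the vanishing side strictly dominates that on the pole side, and $\frac{t\ell^3+dr^2\ell^v}{24}<\frac{\ell^2-1}{24}\ell^v$ for all sufficiently large $v$. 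This is exactly why the hypothesis that $v$ be large enough appears in the statement.

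For the cusps with $\ell^{j+1}\mid c$ (in particular the cusp $\infty$) the factor $E_{j+1}^{\ell^v}$ no longer vanishes, so I must instead show that $\frac{\tilde P}{\eta^{t\ell}(\ell\tau)\eta^{d\ell^v}(r\tau)}$ already vanishes. Choosing $A=\begin{pmatrix}a&b\\c&d\end{pmatrix}\in\Gamma_0(\ell^{j+1})$ and $g=\gcd(c,r)$, I would expand $P|A$ from the $\eta$-quotient transformation law, exactly as the $\eta$-quotient part of $G_m|A$ was computed in Lemma \ref{lem:gm}, obtaining a leading term $a_nq^n$ whose exponent involves $t\delta_\ell+\frac{g^2 d\ell^v}{24r}$ with $\delta_\ell=\frac{\ell^2-1}{24}$. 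Then, using Lemma \ref{lem:gausssum} together with the matrix identity $\begin{pmatrix}1&-u/\ell\\0&1\end{pmatrix}A=A'\begin{pmatrix}1&-u'/\ell\\0&1\end{pmatrix}$, $u'\equiv d^2u\pmod\ell$, I read off the leading term of $P\otimes\left(\frac{\ast}{\ell}\right)|A$, which differs from that of $P|A$ only by the Legendre factor $\left(\frac{n}{\ell}\right)$. The hypothesis that $\left(\frac{g^2 d-tr}{\ell}\right)$ is the same for all $g\mid r$ forces this factor to be one fixed sign across all these cusps, so a single choice $\epsilon=\left(\frac{n}{\ell}\right)$ annihilates the leading term of $\tilde P$ simultaneously; comparing the resulting order with that of the denominator, as at the end of the proof of Lemma \ref{lem:gm}, yields the required vanishing.

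The hard part will be precisely this second cusp computation: pinning down the leading exponent $n$ of $P|A$, verifying that twisting multiplies its coefficient by exactly $\left(\frac{n}{\ell}\right)$, and confirming that the constancy of $\left(\frac{g^2 d-tr}{\ell}\right)$ lets one $\epsilon$ work at every cusp with $\ell^{j+1}\mid c$ at once; without this uniformity $\tilde P$ would fail to be cuspidal somewhere. Granting both cusp analyses, the product is a holomorphic form vanishing at all cusps, hence lies in $S_{\lambda'+1/2}(\Gamma_0(576\ell^{\max\{3,j+1\}}),\chi)$: the weight $\lambda'+1/2$ is the sum of the weights of $\tilde P$, the reciprocal $\eta$-quotient and $E_{j+1}^{\ell^v}$, while the level is the least common multiple of the $576$ from the $24$-scaling, the $\ell^3$ coming from twisting a form whose level is already divisible by $\ell$, and the level $\ell^{j+1}$ of $E_{j+1}$, which produces the exponent $\max\{3,j+1\}$.
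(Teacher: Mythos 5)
The paper in fact omits this proof entirely, saying only that it is ``similar to Lemma \ref{lem:gm}'', and your proposal is exactly that intended adaptation: the same dichotomy of cusps $\frac{a}{c}$ according to $\ell^{j+1}\mid c$ or not, the same order comparison pitting $E_{j+1}(\tau)^{\ell^v}$ (vanishing to order at least $\frac{\ell^2-1}{24}\ell^v$) against the pole $\frac{t\ell^3+dr^2\ell^v}{24}$ of the $\eta$-denominator, and the same Gauss-sum/matrix-identity computation showing the twist multiplies the leading cusp coefficient by a Legendre symbol, so that the constancy of $\left(\frac{g^2d-tr}{\ell}\right)$ over $g\mid r$ lets a single $\epsilon$ annihilate the leading term at every cusp with $\ell^{j+1}\mid c$. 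Your simplification (no Klein-form factor in $P$) is precisely what makes the omitted proof ``similar but lighter,'' so your approach coincides with the paper's.
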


\begin{lem}\label{lem:subseq}
Let $f(q)$ be a formal power series such that $f(q)\equiv \sum_{n\geq 0}a_nq^{n\ell}\pmod{\ell^v}$. If the series $\sum b_nq^n$ and $\sum b_n'q^n$ coincide on the subsequence $\{\ell m+d\, |\,m\in \mbz\}$, then the series $\sum b_nq^nf(q)$ and $\sum b_n'q^nf(q)$ also coincide on the subsequence $\{\ell m+d\,|\,m\in \mbz\}$ modulo $\ell^v$.
\end{lem}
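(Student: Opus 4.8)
The plan is to prove this directly by examining the coefficient of $q^N$ in the product series for indices $N$ lying in the target progression $N\equiv d\pmod{\ell}$, and showing that modulo $\ell^v$ only the ``good'' indices of $\sum b_nq^n$ (those congruent to $d$ modulo $\ell$) can possibly contribute.

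First I would write out the Cauchy product. Expanding $f(q)=\sum_{k\geq 0}c_kq^k$, the hypothesis $f(q)\equiv\sum_{n\geq 0}a_nq^{n\ell}\pmod{\ell^v}$ says precisely that $c_k\equiv a_{k/\ell}\pmod{\ell^v}$ when $\ell\mid k$, while $c_k\equiv 0\pmod{\ell^v}$ whenever $\ell\nmid k$. Hence the coefficient of $q^N$ in $\bigl(\sum_n b_nq^n\bigr)f(q)$ equals $\sum_{i+k=N}b_ic_k$, and reducing modulo $\ell^v$ annihilates every term with $\ell\nmid k$, leaving
\[
[q^N]\left(\sum_n b_nq^n\right)f(q)\equiv\sum_{\substack{i+k=N\\ \ell\mid k}}b_ic_k\pmod{\ell^v}.
\]

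Next I would impose the constraint $N\equiv d\pmod{\ell}$. In each surviving term the condition $\ell\mid k$ forces $i=N-k\equiv N\equiv d\pmod{\ell}$, so only the coefficients $b_i$ with $i\equiv d\pmod{\ell}$ appear on the right-hand side. By hypothesis $b_i=b_i'$ for every such $i$, so applying the identical reduction to $\bigl(\sum_n b_n'q^n\bigr)f(q)$ produces exactly the same sum. Therefore $[q^N]\bigl(\sum b_nq^n\bigr)f(q)\equiv [q^N]\bigl(\sum b_n'q^n\bigr)f(q)\pmod{\ell^v}$ for every $N\equiv d\pmod{\ell}$, which is precisely the assertion that the two product series coincide on the subsequence $\{\ell m+d\}$ modulo $\ell^v$.

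I do not expect any serious obstacle, as this is a bookkeeping lemma rather than a substantive one; the only point requiring care is the translation of the congruence $f\equiv\sum a_nq^{n\ell}$ into the vanishing of $c_k$ modulo $\ell^v$ for $\ell\nmid k$. It is this vanishing, rather than any delicate cancellation among nonzero coefficients, that confines the contributing indices $i$ to the single residue class $d\pmod{\ell}$ and thereby lets the hypothesis $b_i=b_i'$ be invoked termwise.
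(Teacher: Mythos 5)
Your proof is correct and follows essentially the same route as the paper's: both arguments reduce to the observation that the coefficients of $f(q)$ at indices not divisible by $\ell$ vanish modulo $\ell^v$, so in the Cauchy product the coefficient of $q^{\ell m+d}$ picks up only the $b_i$ with $i\equiv d\pmod{\ell}$ (the paper writes this as $\sum b_{\ell n+d}a_{m-n}$), where $b_i=b_i'$ by hypothesis. Your write-up is in fact a bit more careful than the paper's terse version, but there is no substantive difference.
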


\begin{proof}
By the assumption, $\ell^v$ divides the coefficient of $q^n$ of $f(q)$ when $\ell\nmid n$. So in the expression of $\sum b_nq^nf(q)$, the coefficient of $q^{\ell m+d}$ is congruence to $\sum b_{\ell n+d}a_{m-n}\pmod{\ell^v}$. This implies that $\sum b_nq^nf(q)$ and $\sum b_n'q^nf(q)$ coincide on the subsequence $\{\ell m+d\,|\,m\in \mbz\}$ modulo $\ell^v$.
\end{proof}

The following theorem gives the precise congruence relations for $M_{r,d,t}$,which implies Theorem \ref{main-thm-M}.

\begin{thm}
Let $r,d,t,j,v$ be positive integers and $\ell>dr^2$ be a prime number such that $\left(\frac{g^2d-tr}{\ell}\right)$ are the same for all $g|r$. Then there exists a positive proportion of primes $p\equiv -1\pmod{24\ell}$ such that
\begin{equation}\label{eq:mrdt}
    M_{r,d,t}\left(m,\ell^j,\frac{np^3+k-dr}{24}\right)\equiv 0\pmod{\ell^{v}}
\end{equation}
for all $0\leq m<\ell^j$ and $n\equiv k-dr-24\beta\pmod{24\ell}$ with $p\nmid n$ where $\beta$ is a certain integer between $0$ and $\ell$.
\end{thm}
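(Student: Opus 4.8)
The plan is to execute the Mahlburg-type program for which Lemmas \ref{lem:gm}--\ref{lem:subseq} have been assembled: realize the residue-class crank generating function as (a normalization of) a half-integer weight cusp form, annihilate that form modulo $\ell^v$ by a squared Hecke operator produced by Serre's theorem, and finally read the resulting coefficient congruence back through the eta-quotient normalization.

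First I would fix a residue $m$ with $0\le m<\ell^j$, put $N=\ell^j$ and $\zeta=e^{2\pi i/N}$, and use \eqref{GF-M} to write $\sum_n M_{r,d,t}(m,N,n)q^n$ as the average $\frac1N\sum_s\zeta^{-ms}\mathfrak{F}_{r,d,t}(\zeta^s,q)$. Substituting the Klein form \eqref{Klein} and applying Proposition \ref{prop:klein} ties this average, up to the eta factor $\eta^{t\ell}(\ell\tau)\eta^{d\ell^v}(r\tau)/\eta^t(\tau)$, to the series $g_m=G_m+P$. I would then choose the twist parameter $\epsilon$ to be the common value of $\left(\frac{g^2d-tr}{\ell}\right)$ over the divisors $g\mid r$; by the leading-coefficient comparison carried out in Lemma \ref{lem:gm} this single $\epsilon$ makes $\widetilde{g_m}$ vanish to higher order at every cusp $a/c$ with $\ell^{j+1}\mid c$ simultaneously, which is precisely why the hypothesis on $\ell$ is imposed. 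Lemmas \ref{lem:gm} and \ref{lem:p} then supply two half-integer weight cusp forms
\[
\frac{\widetilde{G_m}(24\tau)}{\eta^{t\ell}(24\ell\tau)\eta^{d\ell^v}(24r\tau)}E_{j+1}(24\tau)^{\ell^v},\qquad
\frac{\widetilde{P}(24\tau)}{\eta^{t\ell}(24\ell\tau)\eta^{d\ell^v}(24r\tau)}E_{j+1}(24\tau)^{\ell^v},
\]
lying in $S_{\lambda+1/2}$ and $S_{\lambda'+1/2}$ of $\Gamma_0(576r\ell^{\max\{3,j+1\}})$ with Nebentype $\chi$; the two weights need not agree, which is harmless below. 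Before proceeding I would verify that, after clearing the $2\pi i$, the $\omega_s$ and the $1/N$, these forms have algebraic integer coefficients, so that Theorem \ref{thm:serre} is applicable.

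Next I would run this construction for all $m=0,1,\dots,\ell^j-1$ and apply Theorem \ref{thm:serre} with $M=\ell^v$ to the resulting finite family (the $\ell^j$ forms coming from the $\widetilde{G_m}$ together with the single form coming from $\widetilde{P}$). Since the relevant modulus is a multiple of $24\ell$, this yields a positive proportion of primes $p\equiv-1\pmod{24\ell}$ for which $T_{p^2}$ annihilates every member of the family modulo $\ell^v$. Writing $\mathcal{F}_m:=\frac{\widetilde{g_m}(24\tau)}{\eta^{t\ell}(24\ell\tau)\eta^{d\ell^v}(24r\tau)}E_{j+1}(24\tau)^{\ell^v}=\sum_n a_m(n)q^n$ as the sum of the $\widetilde{G_m}$-form and the $\widetilde{P}$-form, linearity of $T_{p^2}$ gives $\mathcal{F}_m\mid T_{p^2}\equiv0\pmod{\ell^v}$ for every $m$ simultaneously. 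Extracting the coefficient of $q^{pn_0}$ with $p\nmid n_0$ from the Hecke expansion, the weight-dependent middle term carries the Legendre symbol $\left(\frac{\,\cdot\,pn_0}{p}\right)=0$ and the last term carries $a_m(n_0/p)=0$; hence that coefficient equals $a_m(p^3n_0)$ regardless of the weight, and so $a_m(p^3n_0)\equiv0\pmod{\ell^v}$ for all $n_0$ coprime to $p$. This weight-independent cancellation is exactly what produces the cube $p^3$ appearing in \eqref{eq:mrdt} and what makes the differing weights immaterial.

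The final and hardest step is to transport the vanishing of $a_m(p^3n_0)$ back to $M_{r,d,t}$. Since $E_{j+1}$ is an eta quotient with $E_{j+1}(24\tau)^{\ell^v}\equiv1\pmod{\ell^v}$, and since $\eta^{t\ell}(24\ell\tau)$ is literally a series in $q^\ell$ while $\eta^{d\ell^v}(24r\tau)$ is a series supported on multiples of $\ell$ modulo $\ell^v$ (using $(x;x)_\infty^{\ell^v}\equiv(x^\ell;x^\ell)_\infty^{\ell^{v-1}}\pmod{\ell^v}$), the whole normalizing factor and its reciprocal are supported on multiples of $\ell$ modulo $\ell^v$. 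Lemma \ref{lem:subseq} then lets me strip these factors off without disturbing congruences along any progression $\{\ell m'+d_0\}$, transferring the vanishing from $\mathcal{F}_m$ to the twisted series $\widetilde{g_m}(24\tau)$, and, after undoing the twist---which by Lemma \ref{lem:gausssum} merely restricts to the residues with $\left(\frac{\,\cdot\,}{\ell}\right)=-\epsilon$---to the coefficients of $\sum_n M_{r,d,t}(m,N,n)q^{24n}$. Unwinding the substitution $q\mapsto q^{24}$ together with the $q$-power shift introduced by the eta quotient (this shift is the integer offset $k$ of the statement) turns $a_m(p^3n_0)\equiv0$ into $M_{r,d,t}\big(m,\ell^j,\frac{np^3+k-dr}{24}\big)\equiv0\pmod{\ell^v}$, valid for $p\nmid n$ and for the single residue class selected by the twist, namely $n\equiv k-dr-24\beta\pmod{24\ell}$ for an appropriate $\beta\in\{0,\dots,\ell\}$. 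I expect the main obstacle to lie entirely in this last paragraph: pinning down the exponent offset and the surviving residue class exactly, and certifying that every auxiliary modular factor is genuinely trivial modulo $\ell^v$ on the progression in question, so that the clean congruence $a_m(p^3n_0)\equiv0$ becomes the asserted congruence for the crank.
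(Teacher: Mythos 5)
Your proposal is correct and follows essentially the same path as the paper's proof: the Klein-form series $g_m=G_m+P$, the twist with $\epsilon$ determined by the hypothesis on $\ell$, the cusp forms of Lemmas \ref{lem:gm} and \ref{lem:p}, Serre's theorem (Theorem \ref{thm:serre}) applied simultaneously to the family over all $m$, the weight-independent extraction of the coefficient of $q^{pn_0}$ yielding $a_m(p^3n_0)\equiv 0$ (your per-form treatment of the two different weights is in fact cleaner than the paper's single-$\lambda$ display), and the transport back via $E_{j+1}(24\tau)^{\ell^v}\equiv 1\pmod{\ell^v}$, Lemma \ref{lem:subseq}, and the choice of $\beta$. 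The one point you gloss over, which the paper addresses explicitly, is that after clearing the factor $1/N=\ell^{-j}$ the relevant coefficients are $\ell^j M_{r,d,t}(m,\ell^j,\cdot)$, so Hecke annihilation mod $\ell^v$ only yields the crank congruence mod $\ell^{v-j}$; the paper repairs this by noting there are infinitely many admissible $v$ (with $\ell^v\equiv-1\pmod{24r}$), so one runs the whole argument with a larger exponent.
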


\begin{proof}
We first assume that $v\geq \max\{5,t\}$ satisfies $\ell^v\equiv -1\pmod{24r}$. We set $\epsilon=\left(\frac{24(dr-t)}{\ell}\right)$, $\alpha=\frac{t(\ell^2-1)+dr(\ell^v+1)}{24}$ and $0\leq \beta<\ell$ such that $\left(\frac{\alpha+\beta}{\ell}\right)=0$ or $-\epsilon$. By the definition of $\widetilde{g_m}(\tau)$, it is easy to that $g_m(\tau)$ coincides with $\widetilde{g_m}(\tau)$ on the subsequent $\{\ell n'+\alpha+\beta\}$. Moreover, we have  \begin{equation*}
    \eta^{d\ell^v}(24r\tau)=q^{dr\ell^v}\prod_{n\geq 1}(1-q^{24rn})^{d\ell^v}\equiv q^{dr\ell^v}\prod_{n\geq 1}(1-q^{24rd\ell^vn})\pmod{\ell^v}
\end{equation*}
Hence by the Lemma \ref{lem:subseq}, we see that the subsequence of $\frac{\widetilde{g_m}(24\tau)}{\eta^{t\ell}(24\ell \tau)\eta^{d \ell^v}(24r\tau)}$ with the indices $\{\ell n'+\alpha+\beta\}$ coincides with $\frac{g_m(24\tau)}{\eta^{t\ell}(24\ell \tau)\eta^{d \ell^v}(24r\tau)}$ on the indices $\{\ell n'+\alpha+\beta\}$ which is exactly
\begin{equation*}
    \sum_{n'\equiv \alpha+\beta\pmod{\ell}}\ell^jM_{r,d,t}(m,\ell^j,n'-\alpha)q^{24n'-k\ell^2-dr\ell^v}\pmod{\ell^v}
\end{equation*}
By changing the variable, we get
\begin{equation*}
    \sum_{n\equiv 24\beta+dr-k\pmod{24\ell}}\ell^jM_{r,d,t}\left(m,\ell^j,\frac{n+k-dr}{24}\right)q^n.
\end{equation*}
Combining Lemma \ref{lem:gm} and Lemma \ref{lem:p}, we see that there exist two modular forms $f_m$ and $f$ such that
\begin{equation*}
    \sum_{n\equiv 24\beta+dr-k\pmod{24\ell}}\ell^jM_{r,d,t}\left(m,\ell^j,\frac{n+k-dr}{24}\right)q^n\equiv f_m(\tau)+f(\tau)\pmod{\ell^v}.
\end{equation*}
Moreover, by applying Theorem \ref{thm:serre}, we can find a positive propotion of primes $p\equiv -1\pmod{24\ell}$ such that
\begin{equation*}
    f_m|T_{p^2}=f|T_{p^2}\equiv 0\pmod{\ell^v}.
\end{equation*}
This implies that
\begin{equation*}
\begin{split}
    &\ell^jM_{r,d,t}\left(m,\ell^j,\frac{p^2n+k-dr}{24}\right)+\ell^j\chi(p)\left(\frac{(-1)^\lambda n}{p}\right)p^{\lambda-1}M_{r,d,t}\left(m,\ell^j,\frac{n+k-dr}{24}\right)\\
    &+\ell^j\chi(p^2)\left(\frac{(-1)^\lambda}{p^2}\right)p^{2\lambda-1}M_{r,d,t}\left(m,\ell^j,\frac{n/p^2+k-dr}{24}\right)\equiv 0\pmod{\ell^v}.
    \end{split}
\end{equation*}
Finally, replacing $n$ by $pn'$ with $p\nmid n'$, we get
\begin{equation*}
    \ell^jM_{r,d,t}\left(m,\ell^j,\frac{p^3n'+k-dr}{24}\right)\equiv 0\pmod{\ell^v}
\end{equation*}
where $n'\equiv k-dr-24\beta\pmod{24\ell}$. This gives
\begin{equation*}
    M_{r,d,t}\left(m,\ell^j,\frac{p^3n'+k-dr}{24}\right)\equiv 0\pmod{\ell^{v-j}}
\end{equation*}
But we note that there are infinite many $v$ satisfies $\ell^v\equiv -1\pmod{24r}$, so the congruence relation $\eqref{eq:mrdt}$ holds for all $v\in\mbn$.
\end{proof}

We end this article with a short list of questions arising from this project. A natural question is that can one establish the combinatorial interpretations of $M_{r,d,t}(m,n)$?
Moreover, can we find any other kind of congruence of $M_{r,d,t}(m,n)$?

\vspace{0.5cm}
 \baselineskip 15pt
{\noindent\bf\large{\ Acknowledgements}} \vspace{7pt} \par
The first author would like to acknowledge that the research was supported by Fundamental Research Funds for the Central Universities (Grant No. 531118010622). The second author would like to acknowledge that the research was supported by the National Natural Science Foundation of China (Grant Nos. 12001182 and 12171487), the Fundamental Research Funds for the Central Universities (Grant No. 531118010411) and Hunan Provincial Natural Science Foundation of China (Grant No. 2021JJ40037).

\end{document}